\numberwithin{equation}{section}
\theoremstyle{plain}
\newtheorem{theorem}{Theorem}[section]
\newtheorem{lemma}[theorem]{Lemma}
\newtheorem{proposition}[theorem]{Proposition}
\newtheorem{conjecture}[theorem]{Conjecture}
\title[irrationality of a prime factor series]{The irrationality of a prime factor series under a prime tuples conjecture}
\author{Kyle Pratt}
\address{Brigham Young University, Department of Mathematics, Provo, UT 84602, USA}
\email{kyle.pratt@mathematics.byu.edu}
\subjclass[2010]{}
\keywords{}
\begin{document}
\date{}

\maketitle

\begin{abstract}
Let $\omega(n)$ denote the number of distinct prime factors of $n$. Assuming a suitably uniform version of the prime $k$-tuples conjecture, we show that the number
\begin{align*}
\sum_{n=1}^\infty \frac{\omega(n)}{2^n}
\end{align*}
is irrational. This settles (conditionally) a question of Erd\H{o}s.
\end{abstract}

\section{Introduction}

Let $\tau(n),\varphi(n), \sigma(n)$, and $\omega(n)$ denote the number of divisors of $n$, the Euler totient of $n$, the sum of divisors of $n$, and the number of distinct prime factors of $n$, respectively. If $t\geq 2$ is an integer, then Erd\H{o}s \cite{Erdos1948} proved that the series
\begin{align*}
\sum_{n\geq 1} \frac{\tau(n)}{t^n}
\end{align*}
is irrational, and noted that proving analogous results for
\begin{align*}
\sum_{n\geq 1} \frac{\varphi(n)}{t^n}, \ \ \ \ \ \sum_{n\geq 1}\frac{\sigma(n)}{t^n}, \ \ \ \ \ \sum_{n\geq 1}\frac{\omega(n)}{t^n}
\end{align*}
seems ``to present difficulties''\footnote{See also \url{https://www.erdosproblems.com/69}, \url{https://www.erdosproblems.com/249}, and \url{https://www.erdosproblems.com/250}}. Erd\H{o}s repeatedly mentioned the problems of proving the irrationality of these series (see e.g. \cite{Erdos1957,Erdos1988}, \cite[p. 61]{EG1980}). The series involving $\sigma(n)$ is now known to be transcendental as a corollary of deep work of Nesterenko \cite{Nest1996}, but the questions for the other two sums are open. Here we make further, albeit conditional, progress.

Let $\mathcal{L} = \{L_1,\ldots,L_K\}$ be a set of distinct linear forms $L_k(n) = a_kn+b_k$, where the coefficients $a_k,b_k$ are positive integers. For a prime $p$, let $\omega_\mathcal{L}(p)$ denote the number of roots of $\prod_{k=1}^K L_k(n)$ modulo $p$. We say $\mathcal{L}$ is \emph{admissible} if $\omega_\mathcal{L}(p) < p$ for every prime $p$. One has the following conjecture (see, e.g., \cite[p. 3563]{Ford2022}, \cite[p. 174]{Gran2015}).

\begin{conjecture}[Prime $K$-tuples conjecture]\label{conj:basic HL conj}
Let $\mathcal{L} = \{L_1,\ldots,L_K\}$ be an admissible set of linear forms. Then there are infinitely many integers $n$ such that $L_1(n),\ldots,L_K(n)$ are all prime.
\end{conjecture}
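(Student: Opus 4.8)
The final statement is the prime $K$-tuples conjecture of Hardy--Littlewood and Dickson, one of the central open problems of analytic number theory; what follows is therefore a description of the natural line of attack and of the precise point at which it stalls, not a genuine proof. The plan is to study
\[
\mathcal{N}(x) = \#\{\, 1 \le n \le x : L_1(n), \ldots, L_K(n) \text{ are all prime}\,\}
\]
and to show $\mathcal{N}(x) \to \infty$ as $x \to \infty$. The Hardy--Littlewood heuristic predicts the far stronger asymptotic
\[
\mathcal{N}(x) \sim \mathfrak{S}(\mathcal{L}) \, \frac{x}{(\log x)^K}, \qquad \mathfrak{S}(\mathcal{L}) = \prod_p \Bigl(1 - \frac{\omega_\mathcal{L}(p)}{p}\Bigr)\Bigl(1 - \frac1p\Bigr)^{-K},
\]
and the admissibility hypothesis $\omega_\mathcal{L}(p) < p$ for every prime $p$ is exactly what guarantees that no local factor vanishes, so that $\mathfrak{S}(\mathcal{L}) > 0$. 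A successful argument must reproduce at least the qualitative conclusion $\mathcal{N}(x) \to \infty$.

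First I would pursue the circle method. Assume for simplicity that $a_1 = \cdots = a_K = 1$ and set $S(\alpha) = \sum_{n \le x} \Lambda(n) e(n\alpha)$; then $\sum_{n \le x} \prod_{k=1}^K \Lambda(n + b_k)$ is a $K$-fold integral of $\prod_k S(\alpha_k)$ against $\sum_{n \le x} e\bigl(-n(\alpha_1 + \cdots + \alpha_K)\bigr)$, which concentrates on $\alpha_1 + \cdots + \alpha_K$ near $0$. One splits into major arcs---all $\alpha_k$ close to rationals with small denominator---and the complementary minor arcs. On the major arcs the prime number theorem in arithmetic progressions (Siegel--Walfisz, or GRH for uniformity in the moduli) evaluates the relevant exponential sums, and assembling the local densities yields precisely the main term $\mathfrak{S}(\mathcal{L}) x/(\log x)^K$; this part of the argument is essentially routine.

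The main obstacle---and, with present technology, a fatal one---is the minor arcs. Already for $K = 2$ with $b_1 = 0$, $b_2 = 2$ one faces $\sum_{n \le x} \Lambda(n)\Lambda(n+2) = \int_0^1 |S(\alpha)|^2 e(-2\alpha)\, d\alpha$, and the minor-arc portion of this integral is, by Parseval's identity, of size $\asymp x\log x$, which dwarfs the predicted main term of size $\asymp x$; only cancellation in the oscillating factor $e(-2\alpha)$ over the minor arcs could save the day, and no method is known to extract it. For $K = 1$ the conjecture collapses to Dirichlet's theorem, and in genuinely additive ternary problems (Vinogradov's three primes theorem) the minor arcs are controlled by a pointwise bound on $S$ times an $L^2$ average; but the present ``multiplicative'' problem has no comparable bilinear structure in the single variable $n$, and even $K = 2$ (the twin prime problem) is open. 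The sieve-theoretic alternative---Selberg's sieve, or the GPY and Maynard--Tao weights---gives lower bounds for $n$ such that all of $L_1(n), \ldots, L_K(n)$ have boundedly many prime factors and yields bounded gaps between primes, but Selberg's \emph{parity problem} blocks the passage from ``almost prime'' to ``prime'' simultaneously in every form. It is precisely because of this barrier that the present paper does not attempt Conjecture~\ref{conj:basic HL conj} but instead \emph{assumes} a suitably uniform form of it.
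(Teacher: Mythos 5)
This statement is Conjecture~\ref{conj:basic HL conj}, which the paper states as an open hypothesis and never proves --- indeed the whole point of the paper is that the main theorem is \emph{conditional} on a quantitative strengthening of it (Conjecture~\ref{conj:unif HL conj}). Your submission correctly recognizes that no proof exists, and your account of why the natural attacks fail (the minor-arc contribution already swamping the main term for $K=2$, and the parity barrier blocking sieve methods from producing simultaneous primes) is accurate; since neither you nor the paper proves the statement, there is nothing further to compare.
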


For many applications, one desires a quantitative version of Conjecture \ref{conj:basic HL conj}. For instance, given a large $x$, one would like to know how many $n\leq x$ there are such that all of the $L_k(n)$ are prime. Additionally, it is desirable to allow for some uniformity in the conjecture, such that the coefficients of the linear forms and the number of linear forms can change as $x$ increases.

\begin{conjecture}[Quantitative prime $K$-tuples conjecture]\label{conj:unif HL conj}
Let $\mathcal{L} = \{L_1,\ldots,L_K\}$ be an admissible set of linear forms, where $L_k(n) = a_kn+b_k$ with the $a_k,b_k$ positive integers. Define the singular series
\begin{align*}
\mathfrak{S}(\mathcal{L}) = \prod_p \left(1 - \frac{\omega_\mathcal{L}(p)}{p} \right) \left(1 - \frac{1}{p} \right)^{-K}.
\end{align*}
If $x$ is sufficiently large, if $a_k,b_k \leq (\log \log x)^{100}$, and if $K \leq 100\log \log \log x$, then
\begin{align*}
\sum_{\substack{n\leq x \\ L_k(n) \textup{ is prime for }1\leq k \leq K}} \!\!\!\!\!\!\!\!\!\!\!\!\!\!\!\!\! 1 \, \, \, \, \, \, \, \, \, \, \, &=(1+o(1)) \mathfrak{S}(\mathcal{L}) \frac{x}{(\log x)^K},
\end{align*}
where $o(1)$ denotes a quantity which goes to zero as $x$ goes to infinity.
\end{conjecture}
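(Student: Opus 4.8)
The statement is widely believed but, as the label ``Conjecture'' signals, it lies beyond current technology for every $K \ge 2$; what follows is the standard route toward it, together with an honest accounting of where it stops. The plan is to pass to the von Mangoldt-weighted count $\mathfrak{N}(x) = \sum_{n \le x} \prod_{k=1}^K \Lambda(L_k(n))$, since a dyadic decomposition and partial summation turn an asymptotic $\mathfrak{N}(x) \sim \mathfrak{S}(\mathcal{L})\, x$ into the stated count with the factor $x/(\log x)^K$ (one must also discard the prime-power terms in $\Lambda$, a negligible contribution in the stated ranges). To evaluate $\mathfrak{N}(x)$ one applies the Hardy--Littlewood circle method: expand each $\Lambda(L_k(n))$ through additive characters $e(a L_k(n)/q)$ and split into major arcs ($q$ small) and minor arcs. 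On the major arcs the main term assembles from purely local data --- averaging over $n \bmod p$, the proportion of residues with $p \nmid L_k(n)$ for all $k$ is $1 - \omega_{\mathcal{L}}(p)/p$, whereas an independence model predicts $(1-1/p)^K$, and the product over $p$ of the ratios of these is exactly $\mathfrak{S}(\mathcal{L})$. Admissibility forces $\omega_{\mathcal{L}}(p) \le p-1$, so the product converges; and for $a_k, b_k \le (\log\log x)^{100}$ and $K \le 100\log\log\log x$ one checks (from $\omega_{\mathcal{L}}(p) \le K$ for $p$ exceeding all $a_k$) that $\mathfrak{S}(\mathcal{L})$ is positive and of moderate size, so the predicted main term $\mathfrak{S}(\mathcal{L})\, x/(\log x)^K$ is of the right magnitude.

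Before confronting the minor arcs one should record the rigorous scaffolding. For $K = 1$ the claim is the prime number theorem in the progression $b_1 \bmod a_1$, and the Siegel--Walfisz theorem gives it with uniformity far exceeding $a_1 \le (\log\log x)^{100}$, so the $K=1$ case is unconditional. For general $K$, the Brun or Selberg sieve (as in Halberstam--Richert) yields the upper bound $\sum_{n \le x,\ L_k(n)\ \mathrm{prime}\ \forall k} 1 \ll_K \mathfrak{S}(\mathcal{L})\, x/(\log x)^K$ of the correct order, and one verifies that the implied constant is uniform over the stated ranges (routine, since the sieve dimension $K \le 100 \log\log\log x$ grows slowly). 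The Bombieri--Vinogradov theorem supplies an average level of distribution $1/2$ for the primes --- the natural input for such sieves --- and the GPY and Maynard--Tao arguments exploit exactly this to produce, for any admissible $\mathcal{L}$, infinitely many $n$ for which a positive proportion of the $L_k(n)$ are simultaneously prime. None of this, however, produces the asymptotic, nor even all $K$ forms at once.

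The single missing ingredient --- and the reason for the word ``Conjecture'' --- is control of the minor-arc contribution to $\mathfrak{N}(x)$, equivalently a matching lower bound. For $K \ge 2$ the sum $\prod_k \Lambda(L_k(n))$ is governed by a polynomial $\prod_k L_k(n)$ of degree $\ge 2$, and detecting that it is a product of exactly $K$ primes runs straight into Selberg's parity obstruction: no purely sieve-theoretic argument can separate the $n$ with $\Omega(L_1(n)\cdots L_K(n))$ even from those with it odd. Overcoming this would require genuinely new type-II (bilinear) cancellation for the sequences $L_k(n)$, beyond anything delivered by the circle method or the dispersion method; no such estimate is known even for a single pair of forms (the twin-prime case $K = 2$). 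Thus the realistic proposal is: reduce to $\mathfrak{N}(x)$; extract the main term and the positivity and moderate size of $\mathfrak{S}(\mathcal{L})$ from the local densities; record the unconditional $K=1$ case and the order-of-magnitude sieve upper bound with their uniformity; and isolate the minor-arc/parity input as the one ingredient that cannot, at present, be supplied.
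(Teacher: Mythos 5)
You have correctly recognized that this statement is Conjecture \ref{conj:unif HL conj}, which the paper does not prove but simply assumes as a hypothesis for Theorem \ref{thm:main theorem}; there is no proof in the paper to compare against. Your account is accurate and honest: the singular series arises from the local densities exactly as you describe, the $K=1$ case (Siegel--Walfisz) and the sieve upper bound of the correct order are the known unconditional facts in these uniformity ranges, and the parity obstruction is precisely why the asymptotic for $K\geq 2$ remains out of reach, so declining to claim a proof is the right call.
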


There has been recent progress on versions of Conjecture \ref{conj:unif HL conj}, in which several rather than all of the $L_k(n)$ are prime \cite{May2016}.

The statement of Conjecture \ref{conj:unif HL conj} suffices for our present work, but could be refined further. There are various statements of a uniform prime tuples conjecture in the literature (e.g. \cite[Conjecture 1.3]{Kup2023}), but usually only in the case in which $L_k(n)=n+b_k$ for each $k$, so that the coefficients $a_k$ are all equal to one.

Our main result is that Conjecture \ref{conj:unif HL conj} has implications for one of the irrationality questions of Erd\H{o}s.

\begin{theorem}\label{thm:main theorem}
Assume Conjecture \ref{conj:unif HL conj}. Then, for every integer $t\geq 2$, the number
\begin{align}\label{eq:series we prove is irrational}
\sum_{n\geq 1} \frac{\omega(n)}{t^n}
\end{align}
is irrational.
\end{theorem}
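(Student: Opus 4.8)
The plan is to argue by contradiction: suppose $\alpha = \sum_{n\geq 1} \omega(n) t^{-n}$ is rational, say $\alpha = p/q$. The standard strategy for such base-$t$ digit problems is to show that the base-$t$ expansion of $\alpha$ cannot be eventually periodic, by exhibiting, far out in the expansion, long strings of digits that are incompatible with any fixed period. Concretely, if $\alpha$ is rational then for all large $N$ the ``tail sum'' $q t^N \alpha \bmod q$ — or more precisely the fractional part $\{t^N \alpha\}$ — cycles through a set of at most $q$ values, and in particular the windows of digits of $\alpha$ starting at position $N$ repeat with some period $d \mid$ (something dividing a bounded quantity). So it suffices to produce two nearby positions $N_1 < N_2$ with $N_2 - N_1$ equal to a prescribed value (or ranging over a controlled set) where the digit patterns of $\alpha$ differ.

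First I would set up the digit extraction. Writing $\alpha = \sum_n \omega(n) t^{-n}$, the ``digits'' are not literally the base-$t$ digits because $\omega(n)$ can exceed $t$, but one has the clean identity that the contribution to position $N$ is governed by $S(N) := \sum_{n \leq N} \omega(n)$ together with carries; better, I would work with the shifted tails $\beta_N := \sum_{m \geq 1} \omega(N+m) t^{-m}$ and note $t^N \alpha = (\text{integer}) + \sum_{n\leq N}\omega(n)t^{-(n)}\cdot t^{N} $... — the cleaner route is: $\{t^{N}\alpha\}$ depends only on the sequence $(\omega(N+1), \omega(N+2), \dots)$ up to a bounded error, since $\omega(N+m) \ll \log(N+m)$ grows slowly and $\sum_{m} \omega(N+m) t^{-m}$ converges geometrically. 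Rationality of $\alpha$ forces $\{t^{N}\alpha\}$ to be eventually periodic in $N$ with some period $d \leq q$. Hence, comparing a block of $\omega$-values at $N+1,\dots,N+J$ with the block at $N+d+1,\dots,N+d+J$ (for $J$ a slowly growing function of $N$), these blocks must agree in the sense that $\sum_{m\le J}(\omega(N+m)-\omega(N+d+m))t^{-m}$ is small — and since the $\omega$-values are integers changing by bounded amounts, one extracts near-equalities like $\omega(N+1) = \omega(N+d+1)$, at least for a positive proportion of $N$, or after further unfolding an exact local periodicity.

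The heart of the argument, and the main obstacle, is then to \emph{violate} this forced periodicity using Conjecture \ref{conj:unif HL conj}. The idea: choose a large window around some $N$ and, using the quantitative prime tuples conjecture, find an integer $n$ in that window for which a carefully chosen tuple of nearby values $n+b_1, \dots, n+b_K$ (here all $a_k=1$) are simultaneously prime — forcing $\omega(n+b_k)$ to be small (essentially $1$ plus a tiny controlled amount, depending on how many of the $b_k$ one primes) at those spots — while simultaneously arranging, via an independent admissible choice of forms, that the shifted positions $n+d+b_k$ have a prescribed, larger value of $\omega$ (e.g., by forcing $n+d+b_k$ to be divisible by several fixed small primes, which is a congruence condition I can bake into the admissibility bookkeeping, or by \emph{not} priming those spots and using an averaging/second-moment input to guarantee a value of $\omega$ bounded away from $1$ there). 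Arranging that the set of linear forms — the ``prime here'' conditions at $n+b_k$ together with the divisibility conditions distinguishing the shifted block — is admissible, and that its coefficients and cardinality fit inside the ranges $a_k, b_k \leq (\log\log x)^{100}$, $K \leq 100 \log\log\log x$ allowed by Conjecture \ref{conj:unif HL conj}, is the delicate part; one needs $\omega$ to change by more than a carry can absorb, so $K$ must be taken as large as the hypothesis permits, and $N$ (hence $x$) chosen so that the ambient period $d \leq q$ is tiny compared to the window. Once such an $n$ is produced for infinitely many scales, the two blocks of $\omega$-values differ by more than the geometric tail can hide, contradicting the eventual periodicity of $\{t^{N}\alpha\}$, and therefore $\alpha$ is irrational. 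I expect the bulk of the technical work to be (i) making the ``carry'' analysis precise enough that a bounded change in finitely many $\omega$-values genuinely obstructs periodicity, and (ii) the admissibility/singular-series verification ensuring $\mathfrak{S}(\mathcal{L}) \neq 0$ for the composite family of conditions used to separate the block at $n$ from the block at $n+d$.
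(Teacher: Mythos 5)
Your overall strategy (contradiction from the rationality of $\alpha$, read through the base-$t$ tail sums) starts the same way as the paper, but the core of your plan has a genuine gap, in two related places. First, the step where you pass from rationality to ``near-equalities like $\omega(N+1)=\omega(N+d+1)$ for a positive proportion of $N$'' is unjustified and is essentially the whole difficulty: rationality gives only that $b\sum_{m\geq 1}\omega(N+m)t^{-m}$ is an integer for every $N$ (equivalently, periodicity of the fractional parts), and since the $\omega$-values are unbounded, compensations and carries among infinitely many positions can produce the same fractional part from very different blocks. To get a contradiction one needs \emph{exact} knowledge of $\omega(N+m)$ at every consecutive position $m$ in an initial segment of the tail, upper bounds beyond it, and a guaranteed nonzero discrepancy just after the known segment that beats the truncation error; any uncontrolled early position carries weight $t^{-m}$ that swamps whatever discrepancy you force later. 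Second, and fatally for your specific plan, you restrict to forms with $a_k=1$. The positions you must control are \emph{consecutive}, and a tuple of consecutive shifts $n+1,\dots,n+J$ is never admissible (half the forms are even simultaneously), so the prime tuples conjecture cannot pin down a consecutive initial block in your setup; priming a sparse subtuple leaves uncontrolled gaps whose contribution dominates. This is precisely why the paper works at $N=n_0Q$ with $k^2\mid Q$ for all $k\leq K$ and uses the forms $L_k(n)=n\tfrac{Q}{k}+1$ with large coefficients $a_k=Q/k$: then $n_0Q+k=k\bigl(n_0\tfrac{Q}{k}+1\bigr)$ with the two factors coprime, so primality of $L_k(n_0)$ gives the exact values $\omega(n_0Q+k)=\omega(k)+1$ for all $k\leq K$ simultaneously. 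That device is the decisive idea missing from your sketch, and it is the reason Conjecture \ref{conj:unif HL conj} is formulated with general coefficients $a_k$ rather than $a_k=1$.

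A further point: even in the correct framework one must still guarantee, \emph{simultaneously} with the $K$ primality conditions, a position (the paper uses $n_0Q+K+1$) where $\omega$ is provably large enough to exceed the truncation error, together with the upper bounds $\omega(n_0Q+k)\leq(\log\log x)^2$ on the next range; this is the content of Proposition \ref{prop:main prop}, whose hard part (Lemma \ref{lem:main lemma}) is proved unconditionally via a sieve combined with fractional powers of Dirichlet $L$-functions, zero-density estimates, and Siegel-zero considerations. Your idea of instead forcing extra prime factors at a chosen position through congruence conditions is a sensible germ — implemented at the single position $n_0Q+K+1$ inside the paper's framework it could plausibly replace that analytic step, since the congruence can be absorbed into the coefficients while keeping them below $(\log\log x)^{100}$ and preserving admissibility — but as written it is attached to your two-block comparison scheme, which cannot work for the reasons above.
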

Of course, we recover the result mentioned in the abstract by specializing $t=2$. 

We use the standard asymptotic notation $O(\cdot),o(\cdot), \ll, \gg$. These symbols are often used in the context of a large real $x$ which tends to infinity; we always assume $x$ is sufficiently large. Implied constants may depend on the series in \eqref{eq:series we prove is irrational} and the associated integer $t$. If an implied constant depends on some other quantity, we occasionally denote this in a subscript, as in $f \ll_A g$. We shall occasionally use without comment the fact that $\omega(n) \ll \log n$. We also use the fact that $\omega$ is additive: given coprime positive integers $m$ and $n$, we have $\omega(mn) = \omega(m) + \omega(n)$. For $\theta\in \mathbb{R}$, we let $\lfloor \theta \rfloor, \lceil \theta\rceil$ denote, respectively, the floor and ceiling of $\theta$. We usually write the congruence condition $n \equiv a \pmod{q}$ as $n\equiv a (q)$. We write $\mathbf{1}_{A(n)}$ for the indicator function of a condition $A(n)$, so that $\mathbf{1}_{A(n)}=1$ if $A(n)$ is true, otherwise it is zero. We write either $\text{gcd}(m,n)$ or $(m,n)$ for the greatest common divisor of two integers (not both zero).

\section{Proof of Theorem \ref{thm:main theorem}}

In this section, we reduce the proof of Theorem \ref{thm:main theorem} to a technical result (Proposition \ref{prop:main prop} below) that provides for the existence of a positive integer with certain desirable properties. In order to state the proposition, we introduce some parameters that appear throughout the paper.

For large positive $x$, define positive integers $K,L,Q$ by
\begin{align}\label{eq:defn of KLQ}
K &= \lfloor 5\log \log \log x\rfloor, \ \ \ \ \ \ L = \lfloor 2 \log \log x \rfloor, \ \ \ \ \ \ Q = \prod_{p \leq K} p^{2\left\lceil \frac{\log K}{\log p}\right\rceil}.
\end{align}
We note that $k^2 \mid Q$ for every positive integer $k\leq K$, and that
\begin{align}\label{eq:size of Q}
(\log \log x)^{10-o(1)} \leq Q \leq (\log \log x)^{20+o(1)}
\end{align}
by the prime number theorem.

\begin{proposition}\label{prop:main prop}
Assume Conjecture \ref{conj:unif HL conj}. Let $x$ be large, and define $K,L,Q$ as in \eqref{eq:defn of KLQ}. Then there exists a positive integer $n_0 \leq x$ such that the following hold:
\begin{enumerate}
\item\label{item1} $n_0\tfrac{Q}{k}+1$ is prime for every $1\leq k \leq K$,
\item\label{item2} $\omega(n_0Q + k) \leq (\log \log x)^2$ for $K < k \leq L$,
\item\label{item3} $\omega(n_0Q+K+1) > \tfrac{1}{10} \log \log x$.
\end{enumerate}
\end{proposition}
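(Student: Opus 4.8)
The plan is to find $n_0$ satisfying all three conditions simultaneously by combining a sieve/prime-tuples input for condition \eqref{item1} with a density argument for conditions \eqref{item2} and \eqref{item3}. First I would set up the prime-tuples count. Consider the $K$ linear forms $M_k(n) = n\tfrac{Q}{k} + 1$ for $1 \le k \le K$; since $k^2 \mid Q$, the coefficients $Q/k$ are integers of size at most $Q \le (\log\log x)^{20+o(1)} \le (\log\log x)^{100}$, and $K \le 5\log\log\log x \le 100\log\log\log x$, so the hypotheses of Conjecture \ref{conj:unif HL conj} are met provided $\mathcal{M} = \{M_1,\dots,M_K\}$ is admissible. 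I would verify admissibility by a direct local computation: for $p \le K$ one checks that $p \mid Q/k$ for all $k$ (using the exponent $2\lceil \log K/\log p\rceil$ in the definition of $Q$), so each $M_k(n) \equiv 1 \pmod p$ and there are no roots mod $p$; for $p > K$ the forms are distinct modulo $p$ and none is identically zero, so $\omega_{\mathcal{M}}(p) \le K < p$. Hence Conjecture \ref{conj:unif HL conj} gives $(1+o(1))\,\mathfrak{S}(\mathcal{M})\, x/(\log x)^K$ integers $n \le x$ for which condition \eqref{item1} holds, and I would record a lower bound $\gg \mathfrak{S}(\mathcal{M})\, x/(\log x)^K$ for this count, noting $\mathfrak{S}(\mathcal{M}) \gg 1$ uniformly (or at worst $\mathfrak{S}(\mathcal{M}) \ge (\log x)^{-o(1)}$, which is harmless) since the product defining it is absolutely convergent with each local factor bounded away from $0$.

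Next I would handle condition \eqref{item3}, which should hold for almost all $n$. The point is that $\omega(n_0 Q + K+1)$ is typically of size $\log\log(n_0 Q) \sim \log\log x$, far above the threshold $\tfrac{1}{10}\log\log x$. More precisely, writing $m = n_0 Q + K+1$ as $n_0$ ranges over $[1,x]$, I would use a Turán–Kubilius or Chebyshev-type second moment estimate: the number of $n \le x$ with $\omega(n_0 Q + K+1) \le \tfrac{1}{10}\log\log x$ is $o(x)$ — in fact $O(x/(\log\log x))$ or better by a large-deviation bound — because the mean of $\omega(m)$ over this progression is $(1+o(1))\log\log x$ and its variance is also $(1+o(1))\log\log x$, so Chebyshev kills the lower tail below any constant factor less than $1$. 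One must be mildly careful that $\omega$ is being evaluated on a linear progression $n_0 Q + K+1$ rather than on $n_0$ itself, but the standard sieve bounds $\sum_{n\le x} \omega(an+b)$ and $\sum_{n\le x}\omega(an+b)^2$ are insensitive to this (the constants $a=Q, b=K+1$ are tiny), so the exceptional set for \eqref{item3} has size $o(x)$, and even $o\!\big(x/(\log x)^K\big)$ after noting $(\log x)^K = \exp(O((\log\log\log x)(\log\log x)))$ — wait, that is not automatically $o$ of a quantity that itself has a $(\log x)^{-K}$; one does need to be slightly more careful here. The cleanest route is to intersect the exceptional set for \eqref{item3} with the prime-tuples set from the previous paragraph and show the intersection is still positive; for this I would instead run the second-moment argument restricted to the set $\mathcal{P}$ of $n \le x$ satisfying \eqref{item1}. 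Using the prime-tuples asymptotic with one extra prime condition $p \mid n_0 Q + K+1$ (i.e.\ adding the form $M_{K+1}$ or factoring out a prime $p \le (\log\log x)^{100}$ and applying the conjecture with shifted parameters), I would get that $\sum_{n \in \mathcal{P}, n\le x} \omega(n_0 Q + K+1) = (1+o(1)) |\mathcal{P}| \log\log x$ and similarly for the second moment, whence all but $o(|\mathcal{P}|)$ of the $n \in \mathcal{P}$ satisfy \eqref{item3}. I expect this interplay — getting the Turán–Kubilius heuristic to run \emph{conditionally on the tuples set}, with enough uniformity in the extra local conditions — to be the main obstacle, and it is where most of the technical work will concentrate (splitting $\omega$ into small primes $\le K$, medium primes up to $(\log\log x)^{100}$ handled by the conjecture, and large primes handled by a crude upper-bound sieve).

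Finally, condition \eqref{item2} is a pure upper-bound statement and is the easiest: for each fixed $k$ with $K < k \le L$, the number of $n \le x$ with $\omega(n_0 Q + k) > (\log\log x)^2$ is $o(x / L)$ — indeed $O(x \exp(-c(\log\log x)^2 \log((\log\log x)^2/\log\log x)))$, a saving of $\exp(-c(\log\log x)^2 \log\log\log x)$ — by a standard upper-bound sieve (Halberstam–Richert) bounding $\sum_{n \le x, \omega(an+b) = j} 1 \ll \frac{x}{\log x}\frac{(\log\log x)^{j}}{(j-1)!}$ and summing the tail $j > (\log\log x)^2$. Summing this over the at most $L \le 2\log\log x$ values of $k$, the total exceptional set for \eqref{item2} is still $o(x)$, and in fact is smaller than any negative power of $\log x$, hence is $o(|\mathcal{P}|)$ once we know $|\mathcal{P}| \gg x/(\log x)^K = x \exp(-O((\log\log\log x)(\log\log x)))$, since $(\log\log x)^2 \log\log\log x$ dominates $(\log\log\log x)(\log\log x)$. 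To conclude, I would union-bound: the set $\mathcal{P}$ of $n\le x$ satisfying \eqref{item1} has size $\gg x/(\log x)^{K+o(1)}$; the exceptional sets for \eqref{item2} and \eqref{item3} each have size $o(|\mathcal{P}|)$; therefore there exists $n_0 \le x$ in $\mathcal{P}$ satisfying \eqref{item2} and \eqref{item3} as well, which is exactly the desired integer. The only genuinely delicate point to get right is ensuring the $(\log x)^{-K}$ loss from condition \eqref{item1} is more than compensated by the super-polynomial (in $\log x$) savings available for conditions \eqref{item2} and \eqref{item3}; the parameter choices $K = \lfloor 5\log\log\log x\rfloor$ and $L = \lfloor 2\log\log x\rfloor$ in \eqref{eq:defn of KLQ} are evidently tuned precisely so that this balancing works.
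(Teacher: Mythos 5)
Your overall architecture matches the paper's: apply Conjecture \ref{conj:unif HL conj} to the forms $L_k(n)=n\tfrac{Q}{k}+1$ to get $\asymp\mathfrak{S}\,x/(\log x)^K$ integers $n\le x$ satisfying \eqref{item1} (note that $\mathfrak{S}\gg 1$ is actually false --- the local factors at primes just above $K$ are small --- but the fallback bound you allow, $\mathfrak{S}\ge (\log x)^{-o(1)}$, is exactly what the paper proves via $\mathfrak{S}\ge K^{-2K}$), and your treatment of \eqref{item2} is essentially the paper's bound on $\mathcal{E}$ (the paper uses $\tau(n)\ge 2^{\omega(n)}$; the saving $\exp(-c(\log\log x)^2)$ beats $(\log x)^K=\exp(O(\log\log x\cdot\log\log\log x))$). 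The genuine gap is in \eqref{item3}, precisely the step you flag as the main obstacle: the plan ``handle primes up to $(\log\log x)^{100}$ with the conjecture, large primes with a crude upper-bound sieve, then run Tur\'an--Kubilius restricted to the tuples set'' cannot work. The uniformity in Conjecture \ref{conj:unif HL conj} only allows coefficients up to $(\log\log x)^{100}$, so the extra condition $p\mid nQ+K+1$ can be incorporated only for $p\le(\log\log x)^{100}/Q$; such primes contribute on average only about $\sum_{p\le(\log\log x)^{100}}1/p\asymp\log\log\log\log x$ to $\omega(nQ+K+1)$, negligible against the threshold $\tfrac1{10}\log\log x$. Essentially all of the mass of $\omega(nQ+K+1)$ comes from primes up to a power of $x$, and for those an upper-bound sieve gives only upper bounds, while a lower-tail Chebyshev argument requires a lower bound on the conditional mean and two-sided control of the conditional variance over the set where \eqref{item1} holds --- information that is exactly as hard as a tuples conjecture with uniformity in moduli up to powers of $x$.

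There is also a quantitative obstruction even if one grants the second moment: Chebyshev saves only a single factor $1/\log\log x$, but any route that makes the restricted moments computable must relax the $K$ primality conditions in \eqref{item1} to ``no prime factor $\le X$'' via a sieve, which inflates the set by a factor of size roughly $(\log\log x)^{cK}=(\log x)^{o(1)}$; a one-power-of-$\log\log x$ saving cannot absorb this. This is why the paper does not use a second moment at all: it bounds the indicator of failure of \eqref{item3} by $\lambda^{\omega(nQ+K+1)-\frac1{10}\log\log x}$ (an exponential-moment/Rankin trick), applies a Brun sieve to \eqref{item1}, and is then forced to estimate character sums weighted by $\lambda^{\omega(n)}$, leading to fractional powers of Dirichlet $L$-functions, the classical zero-free region and Montgomery's zero-density estimate, a separate treatment of a possible Siegel modulus, and Shiu's theorem for the main term; this is the content of Lemma \ref{lem:main lemma}, which delivers the large-deviation-quality saving $(\log x)^{-c_0+o(1)}$ with $c_0=\tfrac{9-\log 10}{10}$, large enough to dominate all $(\log x)^{o(1)}$ sieve losses. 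So your union-bound skeleton is correct, but the proof of \eqref{item3} as you sketch it does not go through, and repairing it requires either conjectural input far stronger than Conjecture \ref{conj:unif HL conj} or the analytic argument the paper develops.
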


\begin{proof}[Proof of Theorem \ref{thm:main theorem} assuming Proposition \ref{prop:main prop}]
Given an integer $t \geq 2$, let
\begin{align*}
\alpha = \alpha_t = \sum_{n\geq 1} \frac{\omega(n)}{t^n}
\end{align*}
denote the series in \eqref{eq:series we prove is irrational}. Assume for contradiction that $\alpha \in \mathbb{Q}$, so that there exist fixed positive integers $a,b$ such that $\alpha = a/b$. Note that, for any positive integer $N$,
\begin{align*}
T(N) \coloneq b\sum_{k\geq 1} \frac{\omega(N+k)}{t^k}= bt^N \left(\alpha - \sum_{n \leq N} \frac{\omega(n)}{t^n} \right)
\end{align*}
is an integer.

Let $x$ be large, and assume Conjecture \ref{conj:unif HL conj}. We set $N=n_0 Q$, where $n_0$ is the positive integer in Proposition \ref{prop:main prop}. We then split $T(n_0Q) = S_1+S_2+S_3$, where
\begin{align*}
S_1 &= b\sum_{k \leq K} \frac{\omega(n_0Q+k)}{t^k}, \\ 
S_2 &= b\sum_{K < k \leq L} \frac{\omega(n_0Q+k)}{t^k}, \\
S_3 &= b\sum_{k > L} \frac{\omega(n_0Q+k)}{t^k} = O\Big(\frac{\log x}{t^L} \Big).
\end{align*}
Since $n_0Q+k = k(n_0 \tfrac{Q}{k}+1)$ and $\text{gcd}(k,n_0 \tfrac{Q}{k}+1)=1$, we see by part \eqref{item1} of Proposition \ref{prop:main prop} that
\begin{align*}
S_1 &= b \sum_{k\leq K} \frac{\omega(k)}{t^k} + b\sum_{k\leq K} \frac{\omega(n_0 \tfrac{Q}{k}+1)}{t^k} = a - b\sum_{k>K}\frac{\omega(k)}{t^k} + b\sum_{k\leq K} \frac{1}{t^k} \\
&= a + \frac{b}{t-1} + O \left(\frac{\log K}{t^K} \right).
\end{align*}
Parts \eqref{item2} and \eqref{item3} of Proposition \ref{prop:main prop} imply
\begin{align*}
\frac{b\log \log x}{10 t^{K+1}} \leq S_2 \leq \frac{bL(\log \log x)^2}{t^K}.
\end{align*}
Hence $T(n_0Q)=a + \frac{b}{t-1} + S_2 + E$ is an integer, where $E$ is some real number with $|E| \ll \tfrac{\log K}{t^K}$. If $b$ is not divisible by $t-1$, then we obtain a contradiction since $S_2=o(1),E = o(1)$ and $b/(t-1)$ is not an integer. If $b$ is divisible by $t-1$ then we also obtain a contradiction, for then $S_2+E$ is nonzero by the lower bound on $S_2$, but $S_2+E = o(1)$.
\end{proof}

We derive Proposition \ref{prop:main prop} from a technical counting result.

\begin{lemma}\label{lem:main lemma}
Let $x$ be large, and let $K,Q$ be defined as in \eqref{eq:defn of KLQ}. For $1\leq k \leq K$, define the linear form $L_k(n) = n\tfrac{Q}{k}+1$. Let
\begin{align*}
\mathfrak{S} \coloneq \prod_{p\leq K}\left(1 - \frac{1}{p} \right)^{-K} \prod_{p>K}\left(1 - \frac{K}{p} \Big)\Big(1 - \frac{1}{p} \right)^{-K}.
\end{align*}
 Then
\begin{align*}
\sum_{\substack{n\leq x \\ L_k(n) \textup{ is prime for }1\leq k \leq K \\ \omega(nQ+K+1) \leq \tfrac{1}{10} \log \log x}} \!\!\!\!\!\!\!\!\!\!\!\!\!\!\!\!\! 1 \, \, \, \, \,\,\,\,\,\,\,\, \leq \mathfrak{S}\frac{x}{(\log x)^K} (\log x)^{-c_0 + o(1)},
\end{align*}
where $c_0 = \tfrac{9-\log 10}{10} = 0.6697\ldots$.
\end{lemma}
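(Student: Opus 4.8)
The plan is to combine Rankin's trick with a $K$-dimensional upper-bound sieve carrying a multiplicative weight. Write $m(n) = nQ+K+1$ and fix $r = \tfrac1{10}$. Since $0<r<1$, the hypothesis $\omega(m(n))\le\tfrac1{10}\log\log x$ forces $r^{\omega(m(n))}\ge r^{(\log\log x)/10}$, so the sum in the lemma is at most $r^{-(\log\log x)/10}W = (\log x)^{(\log 10)/10}\,W$, where
\begin{align*}
W \;=\; \sum_{\substack{n\le x\\ L_k(n)\textup{ prime for }1\le k\le K}} r^{\omega(m(n))}.
\end{align*}
As $\tfrac{\log 10}{10}-\tfrac9{10}=-c_0$, everything reduces to proving $W\ll\mathfrak{S}\,\tfrac{x}{(\log x)^K}(\log x)^{r-1+o(1)}$; equivalently, $W$ should carry a saving of $(\log x)^{9/10-o(1)}$ beyond $\mathfrak{S}\tfrac{x}{(\log x)^K}$, the conjectural order of the count of $n\le x$ with all $L_k(n)$ prime.

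To bound $W$, I would first relax primality to the absence of small prime factors. Put $z = x^{1/(\log\log x)^2}$. If $n>z$ then each $L_k(n)>z$, so ``$L_k(n)$ prime'' implies $L_k(n)$ has no prime factor below $z$; the remaining $n\le z=x^{o(1)}$ contribute negligibly (note $(\log x)^K = x^{o(1)}$ since $K\ll\log\log\log x$). I would then apply an upper-bound sieve (Brun's or Selberg's) to the condition ``$\prod_{k=1}^K L_k(n)$ is coprime to $\prod_{p<z}p$''. Because $K$ is so small, any sieve loss of the shape $C^K$, $K!$, or $K^{O(K)}$ is $(\log x)^{o(1)}$, so neither the choice of sieve nor the exact parameters matter; in particular one may arrange that only moduli $d\le x^{o(1)}$ occur, so that the level-of-distribution input is just a Siegel--Walfisz-type estimate for $r^\omega$ in a single progression of small modulus. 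This reduces the problem to evaluating, for squarefree $d$ composed of primes below $z$,
\begin{align*}
V(d) \;=\; \sum_{\substack{n\le x\\ d\mid \prod_{k}L_k(n)}} r^{\omega(m(n))}.
\end{align*}

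The heart of the matter is the evaluation of $V(d)$. For every prime $p\le K$ one has $L_k(n)\equiv 1\pmod p$ (since $p^2\mid Q$ and $p\mid\tfrac Qk$ for $k\le K$), so every relevant $d$ is coprime to $Q$ with all prime factors in $(K,z)$; for such $p$ the polynomial $\prod_k L_k$ has exactly $\omega_{\mathcal L}(p)=K$ distinct roots mod $p$, and --- the crucial compatibility --- none of them is a root of $m(\cdot)$ mod $p$, because a common root would force $k\equiv K+1\pmod p$ for some $1\le k\le K$, which is impossible for $p>K$. Hence $d\mid\prod_kL_k(n)$ confines $n$ to $K^{\omega(d)}$ residue classes mod $d$, on each of which $m(n)$ runs over a full residue class mod $dQ$ consisting of integers coprime to $d$. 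Standard mean-value estimates for the bounded multiplicative function $r^{\omega}$ along linear forms in progressions (Selberg--Delange, or Shiu's theorem) then give $V(d) = \tfrac{g(d)}{d}\,X\,(1+o(1)) + r_d$, where $X = \sum_{n\le x}r^{\omega(m(n))}\asymp x(\log x)^{r-1}$ (using $\log\log(xQ) = \log\log x+o(1)$), where $g$ is multiplicative with $g(p) = \tfrac{Kp}{p-1+r}=K+O(K/p)$ for $K<p<z$ and $g(p)=0$ otherwise, and where $\sum_d|r_d|$ is negligible. Feeding this into the sieve, the main term becomes $X\prod_{K<p<z}\!\big(1-\tfrac{g(p)}{p}\big)(1+o(1))$; comparing $\prod_{K<p<z}(1-g(p)/p)$ first with $\prod_{K<p<z}(1-\tfrac Kp)$ and then with $\mathfrak{S}/(\log z)^K$ --- where $\mathfrak{S}$ equals the singular series $\mathfrak{S}(\mathcal L)$ of the tuple $\{L_1,\ldots,L_K\}$, and $(\log z)^{-K}=(\log\log x)^{2K}(\log x)^{-K}$ --- and absorbing every discrepancy (each of shape $e^{O(K)}$ or $(\log\log x)^{O(K)}$) into $(\log x)^{o(1)}$, I obtain $W\ll\mathfrak{S}\,\tfrac{x}{(\log x)^K}(\log x)^{r-1+o(1)}$. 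With the first paragraph and $r=\tfrac1{10}$ this is the lemma.

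The step I expect to be the main obstacle is the joint treatment of the $K$-dimensional sieve and the multiplicative weight. Concretely: one must (i) compute the perturbed local density $g(p)$ accurately --- which requires mean-value estimates for $r^\omega$ along linear forms in arithmetic progressions, uniformly enough over moduli $d\le x^{o(1)}$ (standard, since the Dirichlet series $\zeta(s)^{-r}\sum_n r^{\omega(n)}n^{-s}$ is entire, so $r^\omega$ enjoys a Siegel--Walfisz property) --- and verify the compatibility that $p\mid m(n)$ and $p\mid L_k(n)$ never co-occur for $p>K$, which is what prevents a residue class from being annihilated wholesale by the sieve; and (ii) carry out the Mertens bookkeeping so that the sieved main term reproduces $\mathfrak{S}(\mathcal L)$ up to $(\log x)^{o(1)}$, checking that every $K$-dependent loss is $(\log x)^{o(1)}$. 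It is precisely to make (ii) work that $K$ in \eqref{eq:defn of KLQ} is taken as small as $\lfloor 5\log\log\log x\rfloor$. Rankin's inequality, the sieve itself, and the reduction to primes without small factors are all routine.
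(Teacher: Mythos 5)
Your overall skeleton is the same as the paper's: Rankin's trick with weight $\lambda^{\omega(nQ+K+1)}$ and $\lambda=\tfrac1{10}$, an upper-bound (Brun-type) sieve of dimension $K$ over primes up to $z=x^{1/(\log\log x)^{O(1)}}$, the observation that $p\mid L_k(n)$ and $p\mid nQ+K+1$ cannot co-occur for $p>K$, a Shiu/Selberg--Delange bound giving the factor $(\log x)^{\lambda-1}$, and $K^{O(K)}=(\log x)^{o(1)}$ bookkeeping to recover $\mathfrak S$. The exponent arithmetic ($\tfrac{\log 10}{10}-\tfrac9{10}=-c_0$) is also correct.

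However, there is a genuine gap at exactly the point you flag as ``standard'': the treatment of the remainder in the sieve. Because the sieve must save $(\log z)^{-K}=(\log x)^{-K+o(1)}$, you are forced to take $\log z\gg \log x/(\log\log x)^{O(1)}$ and a truncation length $V\gg K\log\log x$, so the sieve moduli $d\mid\prod_{p\le z}p$ run up to $z^{V}=x^{c/\log\log x}$ --- far beyond the Siegel--Walfisz range $(\log x)^{A}$. The assertion that equidistribution of $r^{\omega}$ in progressions to these moduli follows from a ``Siegel--Walfisz property'' (because $\zeta(s)^{-r}\sum_n r^{\omega(n)}n^{-s}$ is entire) therefore does not apply: per-modulus estimates with the required saving (the total remainder must be $\ll x(\log x)^{-K-9/10-\varepsilon}$) are not available at moduli of size $x^{o(1)}$, and an average over the sieve moduli must be proved. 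This is the core of the paper's argument: after detecting the progressions by Dirichlet characters, the sum $\sum_n\lambda^{\omega(n)}\psi(n)W(n/N)$ is expressed by Mellin inversion through the fractional power $L(s,\psi)^{\lambda}$, which is only holomorphic to the right of the zeros of $L(s,\psi)$; one can thus only shift the contour to $\max(\tfrac9{10},\sigma(\psi))$, and bounding the resulting average over characters requires the classical zero-free region for small conductors together with Montgomery's zero-density estimate for large conductors. Moreover, since $K\to\infty$, a possible Siegel zero would destroy the error term, and the paper must remove the largest prime factor $B$ of the exceptional modulus from the sieving set (using the class-number bound $q_*\gg(\log x)^{1/2}$) before applying the zero-free region. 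None of this machinery --- fractional powers of $L$-functions, zero-density input, exceptional-character handling --- is routine or replaceable by Siegel--Walfisz, and your proposal as written does not supply it; also note that the remainder is not ``a single progression of small modulus'' but a sum over all sieve moduli, which is why a large-sieve/zero-density average is the natural (and apparently necessary) tool.
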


\begin{proof}[Proof of Proposition \ref{prop:main prop} assuming Lemma \ref{lem:main lemma}]
The set of linear forms $\mathcal{L}=\{L_k(n)\}_{k=1}^K$ is admissible. Since $\omega_\mathcal{L}(p) = 0$ for $p \leq K$, and $\omega_\mathcal{L}(p) = K$ for $p>K$, we find $\mathfrak{S}(\mathcal{L}) = \mathfrak{S}$. 

In order to prove Proposition \ref{prop:main prop}, it suffices to prove that
\begin{align*}
\mathcal{S} \coloneq \sum_{\substack{n\leq x \\ L_k(n) \textup{ is prime for }1\leq k \leq K \\ \omega(nQ+k) \leq (\log \log x)^2 \text{ for } K < k \leq L \\ \omega(nQ+K+1) > \tfrac{1}{10} \log \log x}} \!\!\!\!\!\!\!\!\!\!\!\!\!\!\!\!\!\!\!\!\!\!\!\! 1
\end{align*}
is positive. By inclusion-exclusion,
\begin{align*}
\mathcal{S} \geq \sum_{\substack{n\leq x \\ L_k(n) \textup{ is prime for }1\leq k \leq K}} \!\!\!\!\!\!\!\!\!\!\!\!\!\!\!\! 1 \, \, \, \, \, \, \, \, \, \, \, - \sum_{\substack{n\leq x \\ L_k(n) \textup{ is prime for }1\leq k \leq K \\ \omega(nQ+K+1) \leq \tfrac{1}{10} \log \log x}} \!\!\!\!\!\!\!\!\!\!\!\!\!\!\!\!\!\! 1 \, \, \, \, \, \, \, \, \, \, \, \, \, \,- \, \,  \mathcal{E},
\end{align*}
where
\begin{align*}
\mathcal{E}=\sum_{\substack{n\leq x \\ \exists k \in (K,L], \, \omega(nQ + k) > (\log \log x)^2}} \!\!\!\!\!\!\!\!\!\!\!\!\!\!\!\!\!\!\!\! 1.
\end{align*}
By \eqref{eq:defn of KLQ}, \eqref{eq:size of Q}, Conjecture \ref{conj:unif HL conj}, and Lemma \ref{lem:main lemma}, it therefore suffices to show that
\begin{align}\label{eq:desired bound for mathcal E}
\mathcal{E}=o\Big(\mathfrak{S} \frac{x}{(\log x)^K} \Big).
\end{align}
Since $\tau(n) \geq 2^{\omega(n)}$,
\begin{align}\label{eq:upp bound on mathcal E}
\mathcal{E} &\ll L\sum_{m\ll xQ} \tau(m) 2^{-(\log \log x)^2} \ll xLQ (\log x)2^{-(\log \log x)^2}.
\end{align}
By crude estimation, we see that \eqref{eq:upp bound on mathcal E} implies \eqref{eq:desired bound for mathcal E} as long as, say,
\begin{align}\label{eq:low bound on sing series}
\mathfrak{S} \geq (\log x)^{-1}.
\end{align}

We now bound $\mathfrak{S}$ from below. The portion of the product with $p\leq K$ is clearly $\geq 1$, and we split the remaining portion of the product into $K < p \leq 2K$ and $p>2K$. Since $(1-\tfrac{K}{p})(1-\tfrac{1}{p})^{-K}$ is increasing in $p$,
\begin{align*}
\prod_{K<p \leq 2K}\left(1 - \frac{K}{p} \right)\left(1 - \frac{1}{p} \right)^{-K} \geq \prod_{K<p\leq 2K}\left(1 - \frac{K}{K+1} \right)\left(1 - \frac{1}{K+1} \right)^{-K}\geq K^{-K}.
\end{align*}
For $p>2K$, we have
\begin{align*}
\prod_{p> 2K}\left(1 - \frac{K}{p} \right)\left(1 - \frac{1}{p} \right)^{-K} &\geq \exp \left(-\sum_{p>2K}\sum_{\ell \geq 2}\frac{1}{\ell}\Big(\frac{K}{p} \Big)^\ell \right)\geq \exp\left(-K^2\sum_{p>2K}\frac{1}{p^2} \right) \geq e^{-K}.
\end{align*}
Therefore $\mathfrak{S} \geq K^{-2K}$, and \eqref{eq:low bound on sing series} follows, with plenty of room to spare, by \eqref{eq:defn of KLQ}.
\end{proof}

The remainder of the paper is devoted to proving Lemma \ref{lem:main lemma}.

\section{Proof of Lemma \ref{lem:main lemma}}

Before beginning the proof of Lemma \ref{lem:main lemma}, we briefly describe the main ideas. First, by using sieves, we may win a factor of $\mathfrak{S}(\log x)^{-K+o(1)}$ from the fact that $L_k(n)$ is prime for $1 \leq k \leq K$. Second, since integers $n \asymp x$ typically have $\omega(n) = (1+o(1))\log \log x$, the condition that $\omega(nQ+K+1) \leq \frac{1}{10} \log \log x$ is atypical. Indeed, we expect to save a factor of $(\log x)^{-c}$, for some constant $c>0$. If these two sources of savings are ``independent,'' then we obtain the desired result.

Of course, we encounter some technical difficults in executing this strategy. Our method for controlling the condition $\omega(nQ+K+1) \leq \frac{1}{10} \log \log x$, when combined with a sieve, requires us to consider integrals involving Dirichlet $L$-functions to fractional exponents. To maintain holomorphy, we only shift contours of integration in a region devoid of zeros of these $L$-functions. Thus, obtaining good bounds on these integrals requires the use of zero-free regions and zero-density estimates. Since $K$ tends to infinity, it is necessary to account for possible Siegel zeros in order to obtain sufficiently strong error terms.

We begin the proof. By summing dyadically, it suffices to show, for large $x$, that
\begin{align*}
\mathcal{Z} &\coloneq \sum_{\substack{x/2<n\leq x \\ L_k(n) \textup{ is prime for }1\leq k \leq K \\ \omega(nQ+K+1) \leq \tfrac{1}{10} \log \log x}} \!\!\!\!\!\!\!\!\!\!\!\!\!\! 1 \,\,\,\,\,\,\,\leq \mathfrak{S}\frac{x}{(\log x)^K} (\log x)^{-c_0 + o(1)}.
\end{align*}
For $\lambda \in (0,1)$, where $\lambda$ is a constant we choose later, we find
\begin{align*}
\mathbf{1}_{\omega(nQ+K+1) \leq \tfrac{1}{10} \log \log x} \leq \lambda^{\omega(nQ+K+1)-\tfrac{1}{10}\log \log x}.
\end{align*}
We change variables, writing $n$ for $nQ+K+1$, so that
\begin{align*}
\mathcal{Z}&\leq (\log x)^{\tfrac{1}{10}\log(1/\lambda)}\sum_{\substack{xQ/2 < n \leq 2xQ \\ n\equiv K+1 (Q) \\ \tfrac{n-K-1}{k}+1 \textup{ is prime for }1\leq k \leq K}}\!\!\!\!\!\!\!\!\!\!\!\!\!\! \lambda^{\omega(n)}.
\end{align*}
Let $g = \text{gcd}(K+1,Q)$, and observe that $g \mid n$. We define $Q'=Q/g,K'=(K+1)/g$, and change variables $n \rightarrow gn$ to obtain
\begin{align*}
\mathcal{Z} &\leq (\log x)^{\tfrac{1}{10}\log(1/\lambda)}\sum_{\substack{xQ'/2 < n \leq 2xQ' \\ n\equiv K' (Q') \\ \tfrac{ng-K-1}{k}+1 \textup{ is prime for }1\leq k \leq K}}\!\!\!\!\!\!\!\!\!\!\!\!\!\! \lambda^{\omega(gn)}.
\end{align*}
We note that $\text{gcd}(K',Q')=1$. Since $\omega(gn) \geq \omega(n)$, we can replace $\lambda^{\omega(gn)}$ by $\lambda^{\omega(n)}$ for an upper bound.

We introduce a smoothing for technical convenience. We encapsulate the desired properties of the smooth function in the following lemma.

\begin{lemma}\label{lem:smooth fn W}
There is a nonnegative smooth function $W(x)$ which is compactly supported in $[1/4,4]$, and which is equal to one in $[1/2,2]$. Furthermore, the derivatives of $W$ satisfy $|W^{(j)}(x)| \ll j^{3j}$, where the implied constant is absolute. If we write $W^\dagger(s)$ for the Mellin transform of $W$, then $W^\dagger(s)$ is entire, and there exists an absolute constant $c>0$ such that
\begin{align*}
|W^\dagger(s)| &\ll 4^{|\textup{Re}(s)|} \exp(-c|s|^{1/3}).
\end{align*}
\end{lemma}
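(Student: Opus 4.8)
The plan is to build $W$ as the additive convolution of the indicator function of a suitable interval with a fixed bump function of Gevrey class two, and then to read off the Mellin bounds by repeated integration by parts; this will in fact yield slightly more than is asked for. First I would fix, once and for all, a nonnegative $\eta \in C^\infty(\mathbb{R})$ supported in $[-\tfrac18,\tfrac18]$ with $\int_{\mathbb{R}}\eta(u)\,du = 1$ whose derivatives satisfy $|\eta^{(j)}(u)| \ll C^j j^{2j}$ for an absolute constant $C$. The existence of such non-quasianalytic bumps is classical; the most transparent construction is the infinite convolution of the normalized indicators $\tfrac{1}{2\delta_n}\mathbf{1}_{[-\delta_n,\delta_n]}$ with $\delta_n = \delta_0 n^{-2}$, where $\delta_0$ is chosen so that $\sum_{n\ge 1}\delta_n = \tfrac18$. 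Nonnegativity, unit mass, and the support condition are inherited from the factors; for the derivative bounds one passes to the Fourier transform, where $\widehat{\eta}(\xi) = \prod_{n\ge 1}\tfrac{\sin(\delta_n\xi)}{\delta_n\xi}$. Bounding the factors with $\delta_n|\xi|\ge 1$ by $(\delta_n|\xi|)^{-1}$ and estimating $\prod_{n\le N}n^2$ by Stirling gives $|\widehat{\eta}(\xi)| \ll \exp(-c_1|\xi|^{1/2})$ for an absolute $c_1>0$, whence
\[
|\eta^{(j)}(u)| \le \frac{1}{2\pi}\int_{\mathbb{R}}|\xi|^j\,|\widehat{\eta}(\xi)|\,d\xi \ll \int_0^\infty \xi^j e^{-c_1\xi^{1/2}}\,d\xi \ll C^j\,(2j+1)! \ll C^j j^{2j}.
\]

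Next, set $W = \mathbf{1}_{[\tfrac38,\,\tfrac{17}8]} * \eta$. Differentiating under the integral sign, $W^{(j)}(x) = \int_{3/8}^{17/8}\eta^{(j)}(x-t)\,dt$, so $W\in C^\infty$, $W\ge 0$, and $|W^{(j)}(x)| \le \|\eta^{(j)}\|_1 \ll C^j j^{2j}$; since $C^j j^{2j} \le j^{3j}$ for $j\ge C$ and is bounded for smaller $j$, this is $\ll j^{3j}$ with an absolute implied constant, as required. Because $\eta$ is supported in $[-\tfrac18,\tfrac18]$ with unit mass, one checks directly that $W(x) = \int_{x-17/8}^{x-3/8}\eta$ equals $1$ exactly when $[-\tfrac18,\tfrac18]\subseteq[x-\tfrac{17}8,\,x-\tfrac38]$, i.e. for $x\in[\tfrac12,2]$, and vanishes unless $x\in[\tfrac14,\tfrac94]\subset[\tfrac14,4]$. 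Any interval and radius with the same nesting would serve equally well.

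Finally, $W^\dagger(s) = \int_0^\infty W(x)x^{s-1}\,dx = \int_{1/4}^{9/4}W(x)x^{s-1}\,dx$; as an integral over a fixed compact subset of $(0,\infty)$ of functions entire in $s$ with locally bounded integrand, this defines an entire function, with the trivial bound $|W^\dagger(s)|\ll 4^{|\mathrm{Re}(s)|}$. Integrating by parts $j$ times — all boundary terms vanish because $W$ and its derivatives vanish at the endpoints of its support — gives
\[
W^\dagger(s) = \frac{(-1)^j}{s(s+1)\cdots(s+j-1)}\int_0^\infty W^{(j)}(x)\,x^{s+j-1}\,dx .
\]
For $|s|\ge 2j$ every factor $|s+i|$ exceeds $|s|/2$, and $\int_{1/4}^{9/4}x^{\mathrm{Re}(s)+j-1}\,dx \ll 4^{|\mathrm{Re}(s)|}4^{j}$, so $|W^\dagger(s)| \ll 4^{|\mathrm{Re}(s)|}B^j j^{2j}|s|^{-j}$ for an absolute $B$. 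Choosing $j = \lfloor\kappa|s|^{1/2}\rfloor$ with a suitably small absolute $\kappa>0$ (legitimate once $|s|$ is large enough that $1\le j\le|s|/2$) makes the right-hand side $\ll 4^{|\mathrm{Re}(s)|}\exp(-c|s|^{1/2})$, which is stronger than the claimed decay of order $1/3$; for bounded $|s|$ the trivial bound already suffices after enlarging the constant. This yields the lemma.

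The only genuine content is producing the bump $\eta$ with the Gevrey derivative bound and then transferring that regularity through the convolution and the Mellin transform; the choice of interval for the plateau and the integrations by parts are routine bookkeeping. I expect the Fourier-side estimate of the product $\prod_{n}\tfrac{\sin(\delta_n\xi)}{\delta_n\xi}$ (equivalently, the Denjoy–Carleman-type construction of a $C^\infty$ function of Gevrey class $2$) to be the step requiring the most care; alternatively one could simply invoke the classical existence of such bumps and skip it.
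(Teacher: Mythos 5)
Your proof is correct, and it differs from the paper mainly in how the weight $W$ is produced. The paper simply takes $W(x)=G(x-\tfrac12)$ from the construction in Iwaniec's appendix A, quotes the derivative bound $|W^{(j)}|\ll 4^jj!(2j/e)^j\ll j^{3j}$, and then performs exactly the integration by parts you do, choosing $k\asymp|s|^{1/3}$ to get $\exp(-c|s|^{1/3})$. You instead build the bump from scratch as an infinite convolution of normalized indicators with radii $\delta_0 n^{-2}$ (the classical Ingham/Denjoy--Carleman construction), verify the Gevrey-$2$ bound $|\eta^{(j)}|\ll C^jj^{2j}$ on the Fourier side via $|\widehat\eta(\xi)|\ll\exp(-c_1|\xi|^{1/2})$, and convolve with an indicator to get the plateau on $[1/2,2]$ and support in $[1/4,9/4]$. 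Because you keep the sharper $C^jj^{2j}$ bound rather than discarding it for $j^{3j}$, your choice $j\asymp|s|^{1/2}$ yields $\exp(-c|s|^{1/2})$, strictly stronger than what the lemma (and the rest of the paper) needs; the paper's route is shorter because it outsources the construction to a citation, while yours is self-contained. Two cosmetic points: the phrase ``equals $1$ exactly when'' is slightly overstated (only the containment direction is needed, and that is all you use), and the comparison $C^jj^{2j}\ll j^{3j}$ needs, as you note, the absolute implied constant to absorb the finitely many $j<C$ — both harmless.
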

\begin{proof}
The construction in \cite[appendix A]{Iw2014} yields a smooth function $W(x) = G(x-\frac{1}{2})$ which is nonnegative, supported in $[1/4,4]$, and equal to one on $[1/2,2]$ (take $T=\frac{1}{2}, U = \frac{3}{2}, V = \frac{1}{4}$). By differentiation and \cite[Corollary A.2]{Iw2014}, we have $|W^{(j)}(x)| \ll 4^j j! (2j/e)^j\ll j^{3j}$, the implied constants being absolute. Since $W$ is compactly supported away from zero, the Mellin transform
\begin{align*}
W^\dagger(s) = \int_0^\infty W(x) x^{s-1} dx
\end{align*}
is entire. To prove the bound on $|W^\dagger(s)|$, we may assume that $|s|$ is sufficiently large, otherwise the bound is trivial by suitably adjusting the implied constant. For any positive integer $k$, integration by parts yields
\begin{align*}
W^\dagger(s) = \frac{(-1)^k}{s(s+1)\cdots(s+k-1)}\int_0^\infty W^{(k)}(x) x^{s+k-1}dx.
\end{align*}
By the triangle inequality and the bound on derivatives of $W$, we obtain
\begin{align*}
|W^\dagger(s)| &\ll 4^{|\text{Re}(s)|}4^k k^{3k} \prod_{j=0}^{k-1} |s+j|^{-1}.
\end{align*}
We conclude by choosing $k = \lfloor \delta |s|^{1/3}\rfloor$, where $\delta>0$ is a sufficiently small constant.
\end{proof}

Taking $W$ to be the smooth function in Lemma \ref{lem:smooth fn W}, we obtain
\begin{align*}
\mathcal{Z} &\leq (\log x)^{\tfrac{1}{10}\log(1/\lambda)}\sum_{\substack{n\equiv K' (Q') \\ \tfrac{ng-K-1}{k}+1 \textup{ is prime for }1\leq k \leq K}}\!\!\!\!\!\!\!\!\!\!\!\!\!\! \lambda^{\omega(n)} W\left(\frac{n}{xQ'}\right).
\end{align*}

We now account for potential Siegel zeros, following the strategy of \cite{May2016}. For a suitable positive constant $c$, there is at most one real primitive Dirichlet character $\chi_*$ to a modulus $q_* \leq \exp((\log x)^{1/3})$ for which $L(s,\chi_*)$ has a real zero $\beta$ which is greater than $1 - \frac{c}{(\log x)^{1/3}}$ \cite[p. 95]{Dav2000}. If $q_*$ exists, we let $B$ be the largest prime factor of $q_*$. By the class number formula, we have $q_* \geq (\log x)^{1/2}$, say (see \cite[p. 96, equation (12)]{Dav2000}). As $q_*$ is squarefree apart from a bounded power of two, it follows that $\log \log x \ll B \leq \exp((\log x)^{1/3})$. If no such $q_*$ exists, we set $B=1$. 

We define $\mathcal{L}(n) \coloneq \prod_{k=1}^K (\tfrac{ng-K-1}{k}+1)$, $X \coloneq x^{1/(\log \log x)^3}$, $\mathcal{P}\coloneq\prod_{K^2 < p \leq X} p$, and $V \coloneq 2\lfloor (\log \log x)^2 \rfloor$. Since $B \leq X$, we use the Brun sieve (e.g. \cite[p. 56, equation (6.6)]{FI2010}) to obtain
\begin{align*}
\mathcal{Z}&\leq (\log x)^{\tfrac{1}{10}\log(1/\lambda)}\sum_{\substack{n\equiv K' (Q') \\ \text{gcd}(\mathcal{L}(n),\mathcal{P}/B)=1}} \lambda^{\omega(n)} W\left(\frac{n}{xQ'}\right) \\
&\leq(\log x)^{\tfrac{1}{10}\log(1/\lambda)}\sum_{\substack{n\equiv K' (Q')}} \lambda^{\omega(n)}W\left(\frac{n}{xQ'}\right)\sum_{\substack{d \mid \mathcal{L}(n) \\ d \mid \mathcal{P} \\ (d,B)=1 \\ \omega(d) \leq V}}\mu(d).
\end{align*}
Swapping the order of summation then gives
\begin{align*}
\mathcal{Z} &\leq (\log x)^{\tfrac{1}{10}\log(1/\lambda)}\sum_{\substack{d \mid \mathcal{P} \\  (d,B)=1 \\ \omega(d) \leq V}} \mu(d) \sum_{\substack{n\equiv K' (Q') \\ d \mid \mathcal{L}(n)}} \lambda^{\omega(n)}W\left(\frac{n}{xQ'}\right).
\end{align*}
Since $d$ is squarefree and has no prime divisors $\leq K^2$, we see by the Chinese remainder theorem that $d\mid \mathcal{L}(n)$ if and only if $n$ lies in a particular set of reduced residue classes $S(d)$ modulo $d$ of size $K^{\omega(d)}$. We therefore have $\mathcal{Z} \leq (\log x)^{\tfrac{1}{10}\log(1/\lambda)} (\mathcal{M} + \mathcal{R})$, where
\begin{align}
\mathcal{M} &\coloneq \sum_{\substack{d \mid \mathcal{P} \\  (d,B)=1 \\ \omega(d) \leq V}} \mu(d)\frac{K^{\omega(d)}}{\varphi(Q'd)}\sum_{\substack{(n,Q'd)=1}} \lambda^{\omega(n)}W\left(\frac{n}{xQ'}\right)
\end{align}
and
\begin{align}
\begin{split}
\mathcal{R} &\coloneq \sum_{\substack{d \mid \mathcal{P} \\  (d,B)=1 \\ \omega(d) \leq V}} \mu(d) \sum_{v \in S(d)}\Bigg\{\sum_{\substack{n\equiv K' (Q') \\ n\equiv v (d)}} \lambda^{\omega(n)}W\left(\frac{n}{xQ'}\right) - \frac{1}{\varphi(Q'd)}\sum_{\substack{(n,Q'd)=1}} \lambda^{\omega(n)}W\left(\frac{n}{xQ'}\right) \Bigg\}.
\end{split}
\end{align}
We note that $Q'$ and $d$ are coprime, so by the Chinese remainder theorem we may combine the congruence conditions $n\equiv K'(Q')$ and $n \equiv v (d)$ into a single congruence condition modulo $Q'd$.

We first bound the error term $\mathcal{R}$, and then we turn to the main term $\mathcal{M}$.

\subsection{Bounding the error term $\mathcal{R}$}

We apply the triangle inequality and take the worst residue class in $S(d)$ to get
\begin{align*}
|\mathcal{R}| &\leq \sum_{\substack{m \leq Q'X^V \\ (m,B)=1}}K^{\omega(m)} E(m),
\end{align*}
where
\begin{align*}
E(m) \coloneq \max_{(\gamma,m)=1}\Bigg|\sum_{\substack{n\equiv \gamma (m)}} \lambda^{\omega(n)}W\Big(\frac{n}{xQ'}\Big) - \frac{1}{\varphi(m)}\sum_{\substack{(n,m)=1}} \lambda^{\omega(n)}W\left(\frac{n}{xQ'}\right) \Bigg|.
\end{align*}
By Cauchy-Schwarz and the trivial bound $E(m) \ll xQ'/m$, we deduce
\begin{align}\label{eq:cauchy bound for mathcal R}
|\mathcal{R}|^2 &\ll x (\log x)^{3K^2} \mathcal{R}_1,
\end{align}
where
\begin{align*}
\mathcal{R}_1 &= \sum_{\substack{m\leq Q'X^V \\ (m,B)=1}} E(m).
\end{align*}
We apply multiplicative characters to detect the congruence in $E(m)$ and use the triangle inequality to obtain
\begin{align*}
\mathcal{R}_1 &\leq \sum_{\substack{m\leq Q'X^V \\ (m,B)=1}} \frac{1}{\varphi(m)}\sum_{\substack{\chi (m) \\ \chi\neq \chi_0}}\Bigg|\sum_{n}\lambda^{\omega(n)}\chi(n)W\left(\frac{n}{xQ'}\right)  \Bigg|,
\end{align*}
where $\chi_0$ denotes the principal character. We then replace each character $\chi$ modulo $m$ by the primitive character $\psi$ modulo $r$ which induces it. After splitting the range of $r$ dyadically, we obtain
\begin{align*}
\mathcal{R}_1 &\ll (\log x) \sup_{1 \ll R \ll Q'X^V} \, \sum_{\ell \leq Q'X^V}\frac{1}{\varphi(\ell)}\sum_{\substack{r \asymp R \\ (r,B)=1}} \frac{1}{\varphi(r)}\sideset{}{^*}\sum_{\psi (r)}\Bigg|\sum_{(n,\ell)=1}\lambda^{\omega(n)}\psi(n)W\left(\frac{n}{xQ'}\right) \Bigg|,
\end{align*}
where the starred sum denotes a sum over primitive characters.

We use the following lemma to bound $\mathcal{R}_1$.

\begin{lemma}\label{lem:bound lam psi sum via rectangle}
Let $x$ be large, and let $\mathcal{C}\subseteq \mathbb{C}$ denote the region
\begin{align*}
\mathcal{C} = \{u+iv : u\geq \tfrac{1}{2}, |v| \leq (\log x)^{20}\}.
\end{align*}
For a primitive Dirichlet character $\psi \pmod{r}$ with $r\leq x^{1/3}$, let $\sigma(\psi)$ denote the largest real part of a zero of $L(s,\psi)$ contained in $\mathcal{C}$. If $\lambda \in (0,\tfrac{1}{2}]$, $x^{1/2}\leq N \leq x^2$, and $\ell \leq x^{O(1)}$, then
\begin{align*}
\Bigg|\sum_{(n,\ell)=1}\lambda^{\omega(n)}\psi(n)W\left(\frac{n}{N}\right)  \Bigg| \ll r^{1/2} N^{\max(\tfrac{9}{10},\sigma(\psi))} \prod_{p \mid \ell} \left(1 + \frac{3}{p^{9/10}} \right).
\end{align*}
\end{lemma}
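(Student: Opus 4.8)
\medskip

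\textbf{Proof sketch.} The plan is to detect the weight $W(n/N)$ by Mellin inversion, recognize the resulting Dirichlet series as a complex power of $L(s,\psi)$ times a well-behaved factor, and then push the contour to the line $\mathrm{Re}(s)=\max(\tfrac9{10},\sigma(\psi))$ inside $\mathcal C$, where the rapid decay of $W^\dagger$ from Lemma~\ref{lem:smooth fn W} truncates the integral and a convexity bound for $L(s,\psi)$ closes the estimate. We may assume $\psi$ is non‑principal: this is the only case used, and the only one in which the asserted bound can hold, and it guarantees that $L(s,\psi)$ is entire.

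Write $\sigma_0=1+(\log x)^{-1}$. By Lemma~\ref{lem:smooth fn W},
\[
\sum_{(n,\ell)=1}\lambda^{\omega(n)}\psi(n)W\!\left(\frac{n}{N}\right)=\frac{1}{2\pi i}\int_{(\sigma_0)}W^\dagger(s)\,N^{s}\,F(s)\,ds,\qquad F(s):=\sum_{(n,\ell)=1}\frac{\lambda^{\omega(n)}\psi(n)}{n^{s}} .
\]
Since $n\mapsto\lambda^{\omega(n)}$ is multiplicative with value $\lambda$ on every prime power, $F$ has an Euler product, and computing local factors gives the factorization $F(s)=L(s,\psi)^{\lambda}G(s)$, where for $p\nmid\ell$ the $p$‑factor of $G$ is $\bigl(1-(1-\lambda)\psi(p)p^{-s}\bigr)\bigl(1-\psi(p)p^{-s}\bigr)^{\lambda-1}=1+O_{\lambda}(p^{-2s})$ --- the $p^{-s}$ terms cancel --- and for $p\mid\ell$ it is $\bigl(1-\psi(p)p^{-s}\bigr)^{\lambda}$. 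Hence the Euler product for $G$ converges and is non‑vanishing for $\mathrm{Re}(s)>\tfrac12$, where it defines the analytic continuation of $F(s)L(s,\psi)^{-\lambda}$; and on $\mathrm{Re}(s)\ge\tfrac9{10}$ one has $|G(s)|\ll\prod_{p\mid\ell}\bigl(1+\tfrac{3}{p^{9/10}}\bigr)$ with absolute implied constant, the product over $p\nmid\ell$ converging because $\sum_p p^{-9/5}<\infty$ and only boundedly many small primes can contribute a factor exceeding $1+O(p^{-9/5})$, while $\prod_{p\mid\ell}|1-\psi(p)p^{-s}|^{\lambda}\le\prod_{p\mid\ell}\bigl(1+\tfrac{3}{p^{9/10}}\bigr)$ by Bernoulli's inequality.

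Next I would move the contour to the line $\mathrm{Re}(s)=\Theta:=\max(\tfrac9{10},\sigma(\psi))+(\log x)^{-100}$; this small displacement keeps the line off every zero and changes $N^{\Theta}$ only by a factor $1+o(1)$. By the definition of $\sigma(\psi)$, $L(s,\psi)$ is zero‑free on the rectangle $\Theta\le\mathrm{Re}(s)\le\sigma_0$, $|\mathrm{Im}(s)|\le(\log x)^{20}$; as this region is simply connected one may fix the holomorphic branch of $L(s,\psi)^{\lambda}$ that agrees with the Euler product as $\mathrm{Re}(s)\to+\infty$, so that $W^\dagger(s)N^{s}L(s,\psi)^{\lambda}G(s)$ is holomorphic throughout. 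The two horizontal segments at height $\pm(\log x)^{20}$, together with the portion of the original integral above that height, are controlled by $|W^\dagger(s)|\ll 4^{|\mathrm{Re}(s)|}\exp(-c|s|^{1/3})$ and the crude bounds $|F(s)|\le\zeta(\sigma_0)\ll\log x$ on $\mathrm{Re}(s)=\sigma_0$ and a polynomial bound for $L^{\lambda}G$ elsewhere; their total contribution is $\ll x^{O(1)}\exp(-c(\log x)^{20/3})$, negligible against the target. There remains
\[
\ll N^{\Theta}\prod_{p\mid\ell}\!\left(1+\frac{3}{p^{9/10}}\right)\int_{-(\log x)^{20}}^{(\log x)^{20}}\bigl|W^\dagger(\Theta+iv)\bigr|\;\bigl|L(\Theta+iv,\psi)\bigr|^{\lambda}\,dv .
\]

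Finally I would estimate the last integral. On $\mathrm{Re}(s)=\Theta\in[\tfrac9{10},1)$ the convexity bound gives $|L(\Theta+iv,\psi)|\ll\bigl(r(|v|+2)\bigr)^{(1-\Theta)/2}\log\bigl(r(|v|+2)\bigr)\ll\bigl(r(|v|+2)\bigr)^{1/20}\log\bigl(r(|v|+2)\bigr)$; raising to the power $\lambda\le\tfrac12$ and using $|W^\dagger(\Theta+iv)|\ll\exp(-c|v|^{1/3})$ (since $4^{\Theta}\le 4$), the integral is $\ll r^{1/40}(\log 2r)^{1/2}\int_{\mathbb R}\exp(-c|v|^{1/3})(|v|+2)^{1/40}\bigl(\log(|v|+2)+1\bigr)^{1/2}dv\ll r^{1/40}(\log 2r)^{1/2}\ll r^{1/2}$, which proves the lemma. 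I expect the middle step to be the delicate one: establishing the factorization $F=L^{\lambda}G$ together with the holomorphy and uniform bound for $G$ on $\mathrm{Re}(s)>\tfrac12$, and carrying out the contour shift cleanly with a single‑valued branch of $L(s,\psi)^{\lambda}$ across the zero‑free rectangle. The size estimates themselves are comfortable --- the exponent $\tfrac12$ of $r$ is far more than this argument needs --- and the finer zero‑free‑region and zero‑density inputs advertised in the introduction enter not here but in the later summation over the moduli $r$.
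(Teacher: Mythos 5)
Your proposal is correct and follows essentially the same route as the paper: Mellin inversion, factoring the Dirichlet series as $L(s,\psi)^\lambda$ times a bounded Euler product, shifting the contour to $\max(\tfrac{9}{10},\sigma(\psi))$ plus a negligible displacement inside the zero-free rectangle, and finishing with the decay of $W^\dagger$ and a convexity-type bound for $L(s,\psi)$ (your exponent $(1-\sigma)/2$ is sharper than the paper's crude $r^{1/2}(1+|t|)$ bound, but both suffice). Your explicit remark that $\psi$ may be taken non-principal is a reasonable clarification consistent with how the lemma is applied.
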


We show how Lemma \ref{lem:bound lam psi sum via rectangle} allows us to bound $\mathcal{R}_1$ before proceeding to its proof. We treat two different cases, depending on whether $R \leq \frac{1}{10} \exp((\log x)^{1/3})$.

If $R \leq \frac{1}{10} \exp((\log x)^{1/3})$, then since $(r,B)=1$ the classical zero-free region (see \cite[Chapter 14]{Dav2000}) implies $\sigma(\psi) \leq 1 - \frac{c}{(\log x)^{1/3}}$ for every $\psi$, for some constant $c>0$. It follows from Lemma \ref{lem:bound lam psi sum via rectangle} that
\begin{align}\label{eq:R1 bound for small R}
\mathcal{R}_1 &\ll(\log x)^2 R^{3/2} (xQ')^{1-c(\log x)^{-1/3}} \ll x \exp(-\sqrt{\log x}).
\end{align}

Assume now that $R\gg\exp((\log x)^{1/3})$. We break up the sum over $\psi$ according to the value of $\sigma(\psi)$. The contribution from those $\psi$ with $\sigma(\psi) \leq \tfrac{9}{10}$ is $\leq x^{9/10+o(1)}$. For those $\psi$ with $\sigma(\psi) > \tfrac{9}{10}$, the contribution is
\begin{align}\label{eq:intermed R 1 sum}
\ll \frac{(\log x)^{O(1)}}{R^{1/2}} \sum_{\substack{\tfrac{9}{10} \leq \beta \leq 1 \\ \beta = \tfrac{9}{10} + \tfrac{j}{\log x} \\ j \geq 0}} x^\beta \sum_{r\asymp R} \, \sideset{}{^*}\sum_{\substack{\psi(r) \\ \beta \leq \sigma(\psi) < \beta + \tfrac{1}{\log x}}} 1.
\end{align}
Let $N(\sigma,T,\psi)$ denote the number of zeros $\rho$ of $L(s,\psi)$ satisfying $\text{Re}(\rho) \geq \beta$ and $|\text{Im}(\rho)| \leq T$. If $\beta \leq \sigma(\psi)$, then $N(\beta,(\log x)^{20},\psi) \geq 1$, so \eqref{eq:intermed R 1 sum} is
\begin{align*}
&\ll \frac{(\log x)^{O(1)}}{R^{1/2}} \sup_{\tfrac{9}{10}\leq \beta \leq 1} x^\beta \sum_{r\asymp R}\sideset{}{^*}\sum_{\psi(r)} N(\beta,(\log x)^{20},\psi).
\end{align*}
By work of Montgomery \cite[Theorem 12.2]{Mont1971} we have
\begin{align*}
\sum_{r\asymp R}\sideset{}{^*}\sum_{\psi(r)} N(\beta,(\log x)^{20},\psi) &\ll R^{5(1-\beta)}(\log x)^{O(1)}
\end{align*}
since $\beta \geq \frac{9}{10}$, and therefore
\begin{align}\label{eq:bound for large R}
\mathcal{R}_1 &\ll x^{9/10+o(1)}+ (\log x)^{O(1)}\frac{x}{R^{1/2}} \sup_{\tfrac{9}{10}\leq \beta \leq 1}\Big(\frac{x}{R^5}\Big)^{\beta-1} \ll \frac{x}{R^{1/3}}.
\end{align}
Taking \eqref{eq:R1 bound for small R} and \eqref{eq:bound for large R} together shows $|\mathcal{R}_1| \ll x \exp(-c(\log x)^{1/3})$, for some positive constant $c$. Inserting this bound in \eqref{eq:cauchy bound for mathcal R} then gives
\begin{align}\label{eq:final bound on mathcal R}
|\mathcal{R}| \ll x \exp(-(\log x)^{1/4}).
\end{align}

\begin{proof}[Proof of Lemma \ref{lem:bound lam psi sum via rectangle}]
Let $W^\dagger(s)$ denote the Mellin transform of $W$. By Mellin inversion,
\begin{align*}
\sum_{(n,\ell)=1}\lambda^{\omega(n)}\psi(n)W\left(\frac{n}{N}\right) = \frac{1}{2\pi i}\int_{c-i\infty}^{c+i\infty}N^s W^\dagger(s) \prod_{p\mid \ell}\left(1+\lambda\frac{\psi(p)}{p^s-\psi(p)}\right)^{-1}A(s) L(s,\psi)^\lambda ds,
\end{align*}
where $c>1$, and $A(s)$ is an Euler product (depending on $\lambda$ and $\psi$) that converges absolutely for $\text{Re}(s) \geq \tfrac{9}{10}$, and which is uniformly bounded in this region. By Lemma \ref{lem:smooth fn W}, we may truncate the integral to $|s| \leq (\log x)^{10}$ at the cost of negligible error. By the definition of $\sigma(\psi)$, we see $L(s,\psi)^\lambda$ is holomorphic for $\text{Re}(s) > \sigma(\psi)$ and $|\text{Im}(s)| \leq (\log x)^{20}$, so we may move the line of integration from $c$ to $\beta = \max(\tfrac{9}{10},\sigma(\psi)) + \tfrac{1}{\log x}$. We apply the triangle inequality to the integrals over this new vertical line segment and the connecting horizontal line segments. Since $0 < \lambda \leq \frac{1}{2}$, the easy bound \cite[Lemma 10.15]{MV2007} implies
\begin{align*}
|L(\sigma+it,\psi)^\lambda| &\ll 1+ |L(\sigma+it,\psi)|^{1/2} \ll r^{\frac{1-\sigma}{2}+\varepsilon}(1+|t|) \ll r^{1/2}(1+|t|),
\end{align*}
say. Also, for $\text{Re}(s) \geq \tfrac{9}{10}$,
\begin{align*}
\Bigg|\prod_{p\mid \ell}\left(1+\lambda\frac{\psi(p)}{p^s-\psi(p)}\right)^{-1} \Bigg| &\leq \prod_{p\mid \ell}\left(1 + \frac{\lambda}{p^{9/10}-1-\lambda}\right)\leq \prod_{p \mid \ell} \left(1 + \frac{3}{p^{9/10}} \right),
\end{align*}
since $\lambda\leq 1/2$.
\end{proof}

\subsection{Evaluating the main term $\mathcal{M}$}

We write
\begin{align*}
\mathcal{M} &= \frac{1}{\varphi(Q')}\sum_{(n,Q')=1}\lambda^{\omega(n)}W\left(\frac{n}{xQ'}\right) \sum_{\substack{d \mid \mathcal{P} \\ (d,Bn)=1 \\ \omega(d) \leq V}}\frac{\mu(d)}{\varphi(d)}K^{\omega(d)}.
\end{align*}
By \cite[p. 56, equation (6.8)]{FI2010}, we see
\begin{align*}
\sum_{\substack{d \mid \mathcal{P} \\ (d,Bn)=1 \\ \omega(d) \leq V}}\frac{\mu(d)}{\varphi(d)}K^{\omega(d)} &= \sum_{\substack{d \mid \mathcal{P} \\ (d,Bn)=1}}\frac{\mu(d)}{\varphi(d)}K^{\omega(d)} + O \Bigg(\sum_{\substack{d \mid \mathcal{P}\\ \omega(d) = V+1}}\frac{K^{\omega(d)}}{\varphi(d)} \Bigg),
\end{align*}
and we bound the error by
\begin{align*}
\sum_{\substack{d \mid \mathcal{P}\\ \omega(d) = V+1}}\frac{K^{\omega(d)}}{\varphi(d)}\leq \frac{1}{(V+1)!}\left(\sum_{K^2 < p \leq X} \frac{K}{p-1} \right)^{V+1} \ll \exp(-(\log \log x)^2).
\end{align*}
We arrange the complete sum over $d$ as
\begin{align*}
\sum_{\substack{d \mid \mathcal{P} \\ (d,Bn)=1}}\frac{\mu(d)}{\varphi(d)}K^{\omega(d)} &= \prod_{\substack{K^2 < p \leq X \\ p \nmid Bn}}\left(1 - \frac{K}{p-1}\right) = (1+o(1))\mathfrak{S}\prod_{p \leq X}\left(1 - \frac{1}{p}\right)^K \\
&\times \prod_{\substack{p \mid Bn \\ K^2 < p \leq X}} \left( 1 - \frac{K}{p-1}\right)^{-1}\prod_{K < p \leq K^2}\left(1 - \frac{K}{p}\right)^{-1}.
\end{align*}
By Mertens' theorem and some estimations, this is
\begin{align*}
&\leq \mathfrak{S}\frac{(\log \log x)^{O(K)}}{(\log x)^K}e^{O(K)}\prod_{p \mid Bn}(1 + p^{-1/3} ).
\end{align*}
It follows that
\begin{align*}
\mathcal{M} &\leq \mathfrak{S}\frac{(\log x)^{o(1)}}{(\log x)^K}\frac{1}{\varphi(Q')}\sum_{n \asymp xQ'}\lambda^{\omega(n)} \prod_{p \mid n}(1 + p^{-1/3}).
\end{align*}
Work of Shiu \cite[Theorem 1]{Shiu1980} implies
\begin{align*}
\sum_{n \asymp xQ'}\lambda^{\omega(n)} \prod_{p \mid n}(1 + p^{-1/3})\ll \frac{xQ'}{\log x}\exp\left(\sum_{p\ll xQ'}\frac{\lambda}{p}(1+p^{-1/3}) \right) \ll \frac{xQ'}{(\log x)^{1-\lambda}},
\end{align*}
and it follows that
\begin{align}\label{eq:final bound on mathcal M}
\mathcal{M}&\leq \mathfrak{S}\frac{x}{(\log x)^K} (\log x)^{-1+\lambda + o(1)}.
\end{align}
Recalling from \eqref{eq:low bound on sing series} that $\mathfrak{S} \geq (\log x)^{-1}$, comparing \eqref{eq:final bound on mathcal M} and \eqref{eq:final bound on mathcal R} yields
\begin{align*}
\mathcal{Z} &\leq \mathfrak{S}\frac{x}{(\log x)^K} (\log x)^{-1+\lambda+\frac{1}{10}\log(1/\lambda) + o(1)}.
\end{align*}
We choose the optimal $\lambda = \tfrac{1}{10}$ (which is suitable for Lemma \ref{lem:bound lam psi sum via rectangle}) to conclude.

\section*{Acknowledgments}

The author is supported by National Science Foundation grant DMS-2418328.

\bibliographystyle{plain}
\bibliography{refs}

\end{document}